\documentclass[a4paper,12pt,reqno]{amsart}
\usepackage{a4wide}
\usepackage{amsmath}
\usepackage{amssymb}
\usepackage{amsthm}
\usepackage{latexsym}
\usepackage{graphicx}
\usepackage[english]{babel}
              
\newtheorem{prop}[subsection]{Proposition}
\newtheorem{conj}[subsection]{Conjecture}
\newtheorem{teor}[subsection]{Theorem}

\newtheorem{cor} [subsection]{Corollary}
\theoremstyle{definition}

\theoremstyle{remark}

\def\hdepth{\operatorname{hdepth}}

\numberwithin{equation}{section}

\begin{document}

\title[Hdepth of the quotient ring of the edge ideal of a complete bipartite graph]
      {On the Hilbert depth of the quotient ring of the edge ideal of a complete bipartite graph}
\author[Andreea I.\ Bordianu, Mircea Cimpoea\c s]{Andreea I.\ Bordianu$^1$, Mircea Cimpoea\c s$^2$}  
\date{}

\keywords{Hilbert depth; Monomial ideal; Complete bipartite graph}

\subjclass[2020]{05A18, 06A07, 13C15, 13P10, 13F20}

\footnotetext[1]{ \emph{Andreea I.\ Bordianu}, University Politehnica of Bucharest, Faculty of
Applied Sciences, 
Bucharest, 060042, E-mail: andreea.bordianu@stud.fsa.upb.ro}
\footnotetext[2]{ \emph{Mircea Cimpoea\c s}, National University of Science and Technology Politehnica Bucharest, Faculty of
Applied Sciences, 
Bucharest, 060042, Romania and Simion Stoilow Institute of Mathematics, Research unit 5, P.O.Box 1-764,
Bucharest 014700, Romania, E-mail: mircea.cimpoeas@upb.ro,\;mircea.cimpoeas@imar.ro}

\begin{abstract}
Let $n\geq m$ be two positive integers, $S_{n,m}=K[x_1,\ldots,x_n,y_1,\ldots,y_m]$ and $I_{n,m}=(x_iy_j\;:\;1\leq i\leq n,1\leq j\leq m)\subset S_{n,m}$ the edge ideal of a complete bipartite graph. Denote $h(n,m)=\hdepth(S_{n,m}/I_{n,m})$.
We prove that $h(n,m)\geq \left\lceil \frac{n}{2} \right\rceil$ and the equality holds if $m$ belong to a certain interval centered in
$\left\lceil \frac{n}{2} \right\rceil$. Also, we find some tight bounds for $h(n,n)$ and we prove several inequalities between
$h(n,m)$ and $h(n,m')$.
\end{abstract}

\maketitle

\section{Introduction}

Let $K$ be a field and let $S$ be a polynomial ring over $K$.
Let $M$ be a finitely generated graded $S$-module. The Hilbert depth of $M$, denoted by $\hdepth(M)$, is the 
maximal depth of a finitely generated graded $S$-module $N$ with the same Hilbert series as $M$; see
\cite{bruns,uli} for further details.
One would expect that it is easy to compute the Hilbert depth of a module, once its Hilbert function is known. 
But it turns out that even for the powers of the maximal ideal, the computation of the Hilbert depth leads to difficult 
combinatorial computations; see \cite{maxim}. In our opinion, the Hilbert depth invariant is unjustly underrated
in the literature and deserves a closer attention and more research.

In \cite[Theorem 2.4]{lucrare2} it was proved that if $0\subset I\subsetneq J\subset S$ are two squarefree monomial ideals, then
$$\hdepth(J/I)=\max\{q\;:\;\beta_k^q(J/I)=\sum_{j=0}^k (-1)^{k-j}\binom{q-j}{k-j}\alpha_j(J/I)\geq 0\text{ for all }0\leq k\leq q\},$$
where $\alpha_j(J/I)$ is the number of squarefree monomials of degree $j$ in $J\setminus I$, for $0\leq j\leq n$.

Let $n,m\geq 1$ be two integers. 
Using the above characterization of the Hilbert depth, in \cite{lucrare3}, it was showed that if 
$I_{n,m}=(x_iy_j\;:\;1\leq i\leq n,1\leq j\leq m)\subset S_{n,m}=K[x_1,\ldots,x_n,y_1,\ldots,y_m]$ ,
then $\hdepth(I_{n,m})=\left\lfloor \frac{n+m+2}{2} \right\rfloor$. Note that, $I_{n,m}$ is the edge ideal
 of the complete bipartite graph $K_{n,m}$. 

Although, the formula for $\hdepth(I_{n,m})$ is relatively easy to obtain, the problem of computing 
$\hdepth(S_{n,m}/I_{n,m})$ seems very difficult. In \cite[Theorem 2.6]{cipu}, it was proved that
$$\hdepth(S_{n,1}/I_{n,1}) \geq \left\lceil \frac{n}{2} \right\rceil + \left\lfloor \sqrt{n} \right\rfloor - 2,\text{ for all }n\geq 1.$$
Also, in \cite[Theorem ]{cipu}, it was proved that for any $\varepsilon>0$, there exists some integer $A=A(\varepsilon)\geq 0$ such that
$$\hdepth(S_{n,1}/I_{n,1})\leq \left\lceil \frac{n}{2} \right\rceil + \left\lfloor \varepsilon n \right\rfloor + A - 2.$$
As a direct consequence of these above results, we obtain the following asymptotic behavior:
$$\lim_{n\to\infty} \frac{1}{n}\hdepth(S_{n,1}/I_{n,1}) = \frac{1}{2}.$$
Our aim is to extend these results to the general case. We denote 
$$h(n,m):=\hdepth(S_{n,m}/I_{n,m}).$$
In Corollary \ref{minim} we note that 
$$h(n,m)\geq \left\lceil \frac{n}{2} \right\rceil,\text{ for all }1\leq m\leq n.$$
In Theorem \ref{t1}, we prove that $h(n,m) = \left\lceil \frac{n}{2} \right\rceil$, if
\begin{enumerate}
\item[(1)] $n=2s$ and $m=s+t$, such that $t\in \left( \frac{1-\sqrt{1+8s}}{2}, \frac{1+\sqrt{1+8s}}{2} \right)$.
\item[(2)] $n=2s+1$ and $m=s+1+t$, such that $t\in \left( \frac{1-\sqrt{9+16s}}{2}, \frac{1+\sqrt{9+16s}}{2} \right)$.
\end{enumerate} 
In Theorem \ref{t2}, we show that 
$$ \left\lfloor \frac{n}{2} + \sqrt{\left\lfloor \frac{n}{2}\right\rfloor \ln 2} \right\rfloor - 1 \leq h(n,n) \leq 
   \left\lfloor \frac{n+1}{2} + \sqrt{\left\lfloor \frac{n}{2}\right\rfloor \ln 2} \right\rfloor,\text{ for all }n\geq 2.$$
In Theorem \ref{t3}, we prove that
\begin{enumerate}
\item[(1)] $h(n,m)\geq h(n,m')$, for all $1\leq m\leq m'\leq \left\lfloor \frac{n}{2} \right\rfloor$.
\item[(2)] $h(n,m)\leq h(n,m')$, for all $\left\lceil \frac{n}{2} \right\rceil \leq m\leq m'\leq n$.
\item[(3)] $h(n,\left\lfloor \frac{n}{2} \right\rfloor)=h(n,\left\lceil \frac{n}{2} \right\rceil)=\left\lceil \frac{n}{2} \right\rceil$.
\item[(4)] $h(n,m)\geq h(n,n-m)$, for all $1\leq m\leq \left\lfloor \frac{n}{2} \right\rfloor$.
\item[(5)] $\left\lfloor \frac{n}{2} \right\rfloor\leq h(n,m)\leq h(n,1)$, for all $1\leq m\leq n$.
\end{enumerate}
As a consequence, in Corollary \ref{clim}, we show that
$$\lim_{n\to\infty} \frac{1}{n}h(n,m(n))=\frac{1}{2},$$
where $1\leq m(n)\leq n$, for all $n\geq 1$.

In Conjecture \ref{conj}, based on computer experiments, we proposed several inequalities. In Proposition \ref{pop}, we show
that if Conjecture \ref{conj} holds, then 
$$h(n,n)\geq \left\lfloor \frac{n-1}{2} + \sqrt{\left\lfloor \frac{n}{2}\right\rfloor \ln 2} \right\rfloor,$$
which improves the bound given in Theorem \ref{t2}.

\newpage
\section{Preliminaries}

Let $n\geq m\geq 1$ be two integers. 
The complete bipartite graph $K_{n,m}$ is the graph on the vertex set $$V(\mathcal K_{n,m})=\{x_1,\ldots,x_n,y_1,\ldots,y_m\},$$
with the edge set $$E(K_{n,m})=\{\{x_i,y_j\}\;:\;1\leq i\leq n,\;1\leq j\leq m\}.$$ 
Let $S_{n,m}=K[x_1,\ldots,x_n,y_1,\ldots,y_m]$. The edge ideal of $K_{n,m}$ is the ideal
$$I_{n,m}=(x_1,x_2,\ldots,x_n)\cap(y_1,\ldots,y_m)=(x_iy_j\;:\;1\leq i\leq n,\;1\leq j\leq m) \subset S_{n,m}.$$
First, we recall the following result.

\begin{teor}(See \cite[Theorem 2.9]{lucrare3})
We have that $$\hdepth(I_{n,m})=\left\lfloor \frac{n+m+2}{2} \right\rfloor.$$
\end{teor}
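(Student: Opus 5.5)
The plan is to apply the characterization of \cite[Theorem 2.4]{lucrare2} with $J=I_{n,m}$ and $I=0$, in the ring $S_{n,m}$ with $N:=n+m$ variables, and to compute the numbers $\beta_k^q(I_{n,m})$ in closed form.

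\emph{Closed form for $\beta_k^q$.} A squarefree monomial of degree $j$ lies in $I_{n,m}$ if and only if it involves at least one $x_i$ and at least one $y_j$. Hence
$$\alpha_j(I_{n,m})=\binom{N}{j}-\binom{n}{j}-\binom{m}{j}\ \text{ for } j\geq 1,\qquad \alpha_0(I_{n,m})=0.$$
The formula for $\alpha_j$ with $j\ge1$ evaluates to $-1$ at $j=0$, so setting $\alpha_0=0$ amounts to adding $1$ in degree $0$.

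Next we use the Vandermonde-type identity
$$\sum_{j=0}^k(-1)^{k-j}\binom{q-j}{k-j}\binom{a}{j}=\binom{a-q}{k},$$
valid with generalized binomial coefficients, so that $\binom{-b}{k}=(-1)^k\binom{b+k-1}{k}$. It can be proved by comparing coefficients in $(1+t)^a(1+t)^{-q}=(1+t)^{a-q}$. Applying it to each of the three terms, and adding the contribution $(-1)^k\binom{q}{k}$ of the degree-$0$ correction, gives
$$\beta_k^q(I_{n,m})=\binom{N-q}{k}-\binom{n-q}{k}-\binom{m-q}{k}+(-1)^k\binom{q}{k}.$$
The theorem then reduces to an elementary sign analysis of this expression. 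By symmetry we may assume $n\ge m$.

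\emph{Upper bound.} Set $q_0=\left\lfloor\frac{N+2}{2}\right\rfloor$. It suffices to exhibit, for $q=q_0+1$, some $k\le q$ with $\beta_k^q<0$. The natural choice is $k=N-q+1$, which is at most $q$ because $q>N/2$. For this $k$ the term $\binom{N-q}{k}$ vanishes, and since $q>m$ (indeed $q>N/2\ge m$), the terms $\binom{m-q}{k}$ and, when $q>n$, $\binom{n-q}{k}$ become signed binomials $(-1)^k\binom{k+q-m-1}{k}$, and so on. The sign pattern in $k$ should make the total negative: $(-1)^k\binom{q}{k}$ competes against the terms $-(-1)^k\binom{k+q-m-1}{k}$, and the latter dominates since $k+q-m-1\ge q$ in the relevant range. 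If the parity of $k$ is unfavourable, I would try $k=N-q+2$ instead. I would check small cases such as $n=m=1$ to make sure the choice of $k$ is right.

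\emph{Lower bound, the main obstacle.} We must show $\beta_k^{q_0}\ge 0$ for all $0\le k\le q_0$. I would split into three ranges of $k$:
\begin{itemize}
\item For $k\le N-q_0$, the positive term $\binom{N-q_0}{k}$ should dominate. When $q_0\le n$, this needs $\binom{N-q_0}{k}\ge\binom{n-q_0}{k}+\binom{m-q_0}{k}+\binom{q_0}{k}$ in the appropriate signed sense, which follows from Vandermonde since $N-q_0\ge q_0-1$.
\item For $k>N-q_0$, the first term vanishes and one pairs $(-1)^k\binom{q_0}{k}$ with $-\binom{m-q_0}{k}=-(-1)^k\binom{k+q_0-m-1}{k}$ and with the analogous $n$-term, using $q_0-m-1<q_0$ and the exact choice of $q_0$. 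This is where the floor in $\left\lfloor\frac{N+2}{2}\right\rfloor$ becomes tight.
\end{itemize}
I expect this careful bookkeeping of signs and parities to be the hardest part. I would first handle the case $n=m$ to calibrate, and then argue monotonically in $n-m$.
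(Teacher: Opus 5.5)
The paper only quotes this result from \cite{lucrare3} without proof, so your argument must stand on its own. It does not, because the key identity is false. For $a=0$ the left side of $\sum_{j=0}^k(-1)^{k-j}\binom{q-j}{k-j}\binom{a}{j}=\binom{a-q}{k}$ is $(-1)^k\binom{q}{k}$, while the right side is $\binom{-q}{k}=(-1)^k\binom{q+k-1}{k}$. Your generating-function justification treats $(-1)^{k-j}\binom{q-j}{k-j}$ as the coefficient $\binom{-q}{k-j}$ of $(1+t)^{-q}$, but its top index depends on $j$. The correct bookkeeping is $\sum_k\beta_k^q t^k=\sum_j\alpha_j t^j(1-t)^{q-j}$. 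This yields $\sum_j(-1)^{k-j}\binom{q-j}{k-j}\binom{a}{j}=\binom{a-q+k-1}{k}=(-1)^k\binom{q-a}{k}$, which is why Theorem \ref{suc} features $\binom{q-n}{2\ell}=\binom{n-q+2\ell-1}{2\ell}$. Therefore, with $N=n+m$,
$$\beta_k^q(I_{n,m})=(-1)^k\left[\binom{q}{k}+\binom{q-N}{k}-\binom{q-n}{k}-\binom{q-m}{k}\right].$$
The discrepancy is not cosmetic. At $k=2$ your expression equals $nm-q$, whereas $\beta_2^q=\alpha_2=nm$. For $n=m=1$ and $q=2$ it gives $\beta_2^2=-1$, which would ``disprove'' the statement for the principal ideal $(x_1y_1)$.

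Everything after that inherits the error. In the upper bound, the vanishing of $\binom{N-q}{k}$ at $k=N-q+1$ is an artifact of the wrong formula, and neither of your witnesses works: for $n=m=5$ and $q=7$ one has $\beta_4^7=50$ and $\beta_5^7=0$. The right witness is $k=3$. Since $\alpha_0=\alpha_1=0$, $\alpha_2=nm$ and $\alpha_3=nm(N-2)/2$, one gets $\beta_3^q=\alpha_3-(q-2)\alpha_2=\frac{nm}{2}(N+2-2q)$, which is negative as soon as $q>\frac{N+2}{2}$.

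For the lower bound you give no argument, only a list of terms that ``should dominate''. With the correct formula it is short. Assume $n\ge m$ and observe that $q+(q-N)=(q-n)+(q-m)$. For $N$ even, $q_0=s+1$ with $s=N/2$, and with $d=(n-m)/2$ one gets
$$\beta_k^{q_0}=\left[\binom{s+k-2}{k}-\binom{d+k-2}{k}\right]+(-1)^k\left[\binom{s+1}{k}-\binom{d+1}{k}\right].$$
This is $\ge 0$ for even $k$, since both brackets are. It equals $0$ for $k=1$. It is $\ge 0$ for odd $k\ge 3$, because $x\mapsto\binom{x+k-2}{k}-\binom{x+1}{k}$ is nondecreasing on integers $x\ge 0$ (its increment is $\binom{x+k-2}{k-1}-\binom{x+1}{k-1}$) and $0\le d\le s$. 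For $N$ odd the same argument works with $y_1=(N-1)/2$, $y_0=(n-m-1)/2$ and $\binom{y+k-1}{k}$ in place of $\binom{x+k-2}{k}$.
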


However, $\hdepth(S_{n,m}/I_{n,m})$ seems very difficult to compute, in general.
An upper bound is given by the following theorem:

\begin{teor}(\cite[Theorem 2.6]{lucrare3})
We have that $$\hdepth(S_{n,m}/I_{n,m})\leq \left\lceil n+m+\frac{1}{2}- \sqrt{2nm+\frac{1}{4}} \right\rceil.$$
\end{teor}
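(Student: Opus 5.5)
The plan is to apply the characterization \cite[Theorem 2.4]{lucrare2} recalled in the Introduction with $J=S_{n,m}$ and $I=I_{n,m}$, and to use only the single inequality $\beta_2^q\geq 0$. Write $N=n+m$ and $q=\hdepth(S_{n,m}/I_{n,m})$.

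First I compute the numbers $\alpha_j=\alpha_j(S_{n,m}/I_{n,m})$. A squarefree monomial lies outside $I_{n,m}=(x_1,\ldots,x_n)\cap(y_1,\ldots,y_m)$ exactly when it involves only $x$-variables or only $y$-variables. Hence $\alpha_0=1$ and $\alpha_j=\binom{n}{j}+\binom{m}{j}$ for $j\geq 1$. In particular $\alpha_1=N$, and by Vandermonde
$$\alpha_2=\binom{n}{2}+\binom{m}{2}=\binom{N}{2}-nm.$$

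Next I treat the case $q\geq 2$. Since $q$ is the Hilbert depth, the characterization gives $\beta_2^q\geq 0$, where
$$\beta_2^q=\binom{q}{2}-(q-1)N+\binom{N}{2}-nm .$$
Put $d=N-q$. Expanding, $q(q-1)-2(q-1)N+N(N-1)=(N-q)^2+(N-q)$, so
$$\beta_2^q=\frac{d(d+1)}{2}-nm.$$
Therefore $\beta_2^q\geq 0$ means $d^2+d\geq 2nm$. Because $d\geq 0$, this is equivalent to $d\geq -\frac12+\sqrt{2nm+\frac14}$. Hence
$$q\leq n+m+\frac12-\sqrt{2nm+\frac14}\leq \left\lceil n+m+\frac12-\sqrt{2nm+\frac14}\right\rceil.$$

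Finally I handle $q\leq 1$, where $\beta_2^q$ is not among the constraints. It suffices that the right-hand side is at least $1$. This holds because $2nm+\frac14\leq (n+m)^2$ for $n,m\geq 1$. Then $n+m+\frac12-\sqrt{2nm+\frac14}\geq\frac12>0$, so its ceiling is at least $1\geq q$.

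The only real step is the algebraic identity turning $\beta_2^q$ into $\frac{d(d+1)}{2}-nm$. Everything else is the routine count of squarefree monomials.
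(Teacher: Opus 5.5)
Your proof is correct. The paper gives no argument for this statement; it only quotes \cite[Theorem 2.6]{lucrare3}. Your computation is exactly the $\ell=1$ case of Theorem \ref{suc}. With $d=n+m-q$ one checks that $\binom{q-n}{2}+\binom{q-m}{2}-\binom{q}{2}=\binom{d+1}{2}-nm=\beta_2^q$. So both routes reduce to the quadratic inequality $q^2-(2n+2m+1)q+n^2+m^2+n+m\geq 0$, whose roots are $n+m+\frac12\pm\sqrt{2nm+\frac14}$. Working from the general $\beta$-characterization, as you do, has the advantage of not needing Theorem \ref{suc} at all.

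Two small remarks. First, the step ``because $d\geq 0$'' should be justified by $q\leq n+m$, since the Hilbert depth never exceeds the number of variables. This justification is genuinely needed: for instance $n=m=1$, $d=-2$ also satisfies $d^2+d\geq 2nm$. Second, since $q$ is an integer, your argument actually gives the sharper bound $\hdepth(S_{n,m}/I_{n,m})\leq\left\lfloor n+m+\frac12-\sqrt{2nm+\frac14}\right\rfloor$. For the case $q\leq 1$, use that $n+m+\frac12-\sqrt{2nm+\frac14}\geq 1$ is equivalent to $n(n-1)+m(m-1)\geq 0$. For $n=m=2$ this floor version gives $\hdepth(S_{2,2}/I_{2,2})\leq 1$, which is sharp by Theorem \ref{t1}, whereas the ceiling only gives $\leq 2$.
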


Also, we have the following result:

\begin{teor}\label{suc}(\cite[Theorem 2.7]{lucrare3})
We have that 
$$\hdepth(S_{n,m}/I_{n,m})=\max\{q\leq n+m\;:\; \binom{q-n}{2\ell}+\binom{q-m}{2\ell} \geq \binom{q}{2\ell},
\text{ for all } 1\leq \ell\leq \left\lfloor \frac{q}{2} \right\rfloor \}.$$
Moreover, $\hdepth(S/I) < m$, if $n \leq 2m-2$. Also, $m \leq \hdepth(S/I) \leq n-m+1$, if $n \geq 2m-1$.
\end{teor}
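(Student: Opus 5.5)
We would derive this from the characterization of \cite[Theorem 2.4]{lucrare2} recalled in the Introduction, applied with $J=S_{n,m}$ and $I=I_{n,m}$. A squarefree monomial lies outside $I_{n,m}=(x_1,\ldots,x_n)\cap(y_1,\ldots,y_m)$ exactly when its support is contained in $\{x_1,\ldots,x_n\}$ or in $\{y_1,\ldots,y_m\}$. Hence
$$\alpha_0=1,\qquad \alpha_j=\binom{n}{j}+\binom{m}{j}\quad (j\geq 1),$$
that is, $\alpha_j=\binom{n}{j}+\binom{m}{j}-\binom{0}{j}$ for all $j\geq 0$. The formula for $\beta_k^q$ is linear in the $\alpha_j$, so we compute it separately on each of the three pieces.

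\textbf{Closed form for $\beta_k^q$.} We use generalized binomial coefficients. The identity $(-1)^{k-j}\binom{q-j}{k-j}=\binom{k-q-1}{k-j}$ together with Vandermonde's convolution gives
$$\sum_{j=0}^k(-1)^{k-j}\binom{q-j}{k-j}\binom{a}{j}=\binom{a+k-q-1}{k}=(-1)^k\binom{q-a}{k}.$$
Applying this with $a=n$, $a=m$ and $a=0$ yields
$$\beta_k^q(S_{n,m}/I_{n,m})=(-1)^k\left[\binom{q-n}{k}+\binom{q-m}{k}-\binom{q}{k}\right],$$
where $\binom{q-n}{k}$ is the generalized binomial coefficient if $q<n$.

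\textbf{Reduction to even $k$.} For $k=0$ we get $\beta_0^q=1\geq 0$. For even $k=2\ell$ the condition $\beta_k^q\geq 0$ is exactly the inequality in the statement. It remains to show that for odd $k$ and $q\leq n+m$ the condition $\binom{q}{k}\geq\binom{q-n}{k}+\binom{q-m}{k}$ holds automatically. We split into cases.
\begin{enumerate}
\item[(i)] If $q\geq n$, then $a=q-n\geq 0$, $b=q-m\geq 0$ and $a+b\leq q$. Vandermonde's convolution gives $\binom{a}{k}+\binom{b}{k}\leq\binom{a+b}{k}\leq\binom{q}{k}$ for $k\geq 1$.
\item[(ii)] If $m\leq q<n$, then $\binom{q-n}{k}=-\binom{n-q+k-1}{k}\leq 0$ because $k$ is odd, and $\binom{q-m}{k}\leq\binom{q}{k}$.
\item[(iii)] If $q<m$, both terms on the right are nonpositive.
\end{enumerate}
This proves the displayed formula for $\hdepth$. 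The bound $q\leq n+m$ is automatic, since the Hilbert depth is at most the number of variables.

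\textbf{The ``moreover'' part.} We expect this to be the main obstacle, since it requires choosing the right $q$ and $\ell$. For the upper bound $\hdepth<m$ when $n\leq 2m-2$, we would show that every $q\geq m$ fails the $\ell=1$ inequality $\binom{q-n}{2}+\binom{q-m}{2}\geq\binom{q}{2}$. Expanding, this inequality reads
$$q^2-(2n+2m-1)q+n^2+m^2+n+m\geq 0,$$
a quadratic in $q$. Evaluated at $q=m$ and at $q=n+m$ it is negative, using $n\leq 2m-2$ and $m\geq 1$. By convexity it is then negative on the whole interval $[m,n+m]$. In the same way, for $n\geq 2m-1$ the $\ell=1$ test at $q=n-m+2$ gives the upper bound $n-m+1$; here we need to check that $q=n-m+2$ is at least $n-m+2>m-1$ and lies in the correct range.

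For the lower bound $m$ when $n\geq 2m-1$, we take $q=m$. Then $\binom{q-m}{2\ell}=0$ and $\binom{m-n}{2\ell}=\binom{n-m+2\ell-1}{2\ell}$. Since $n-m\geq m-1$, we have $n-m+2\ell-1\geq m+2\ell-2\geq m$ for $\ell\geq 1$, so this coefficient is at least $\binom{m}{2\ell}$. This gives all the required inequalities at $q=m$.

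Finally, to conclude that the maximum is at least $m$, we need the inequalities at $q=m$ to suffice. This is guaranteed by the monotonicity of Hilbert depth in the characterization: if $\beta^{q}\geq 0$ then $\beta^{q'}\geq 0$ for $q'<q$.
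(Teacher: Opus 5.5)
The paper gives no proof of this statement; it quotes it from \cite[Theorem 2.7]{lucrare3}. Deriving it from the characterization of \cite[Theorem 2.4]{lucrare2} is the natural route, and your derivation of the main formula is correct and complete. The values of $\alpha_j$ are right. The closed form $\beta_k^q=(-1)^k\bigl[\binom{q-n}{k}+\binom{q-m}{k}-\binom{q}{k}\bigr]$, obtained from the sign-reversal identity and Vandermonde, is right. The case analysis showing that the odd-$k$ conditions hold automatically for $q\le n+m$ also checks out. In fact $\beta_1^q=n+m-q$, so the restriction $q\le n+m$ is forced by the characterization itself.

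The ``moreover'' part, however, contains an arithmetic error that breaks the argument as written. Expanding $\binom{q-n}{2}+\binom{q-m}{2}\ge\binom{q}{2}$ gives
\[
f(q):=q^2-(2n+2m+1)q+n^2+m^2+n+m\ge 0,
\]
not the polynomial with linear coefficient $-(2n+2m-1)$ that you display. For your polynomial the endpoint claims are false. At $q=m$ it equals $n(n-2m+1)+2m$, which is $2>0$ when $n=2m-2$ (e.g.\ $n=m=2$).

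With the correct $f$ the first bound goes through. One has $f(m)=n(n-2m+1)<0$ when $n\le 2m-2$, and $f(n+m)=-2nm<0$, so convexity excludes every $q\in[m,n+m]$. Here $\ell=1$ is admissible because $m\le n\le 2m-2$ forces $m\ge 2$.

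For the bound $n-m+1$, the check you flag is not the relevant one. What is needed is
\[
f(n-m+2)=2\bigl(2m^2-mn-3m+1\bigr)\le 2(1-2m)<0\quad\text{for } n\ge 2m-1.
\]
Together with $f(n+m)<0$ and convexity, this excludes all $q\in[n-m+2,n+m]$. Equivalently, for $q\le n+m$ the $\ell=1$ condition reads $q\le n+m+\frac12-\sqrt{2nm+\frac14}$. This root is $<m$ exactly when $n\le 2m-2$, and it is $<n-m+2$ whenever $n\ge 2m-1$.

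Finally, your last paragraph has the logic reversed. $\hdepth\ge m$ follows simply because $q=m$ lies in the set over which the maximum is taken, so no monotonicity is needed there. Monotonicity does hold, since $\beta_k^{q-1}=\sum_{i=0}^{k}\beta_i^q$. It is what you would use to deduce an upper bound from the failure of a single $q$. With the convexity argument over whole intervals it is not needed at all.
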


From the above theorem, we can easily find the following lower bound for $\hdepth(S_{n,m}/I_{n,m})$:

\begin{cor}\label{minim}
We have that $\hdepth(S_{n,m}/I_{n,m})\geq \left\lceil \frac{n}{2} \right\rceil$.
\end{cor}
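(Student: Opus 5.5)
The plan is to apply Theorem \ref{suc} directly with $q:=\left\lceil \frac{n}{2} \right\rceil$. Since $\hdepth(S_{n,m}/I_{n,m})$ is the maximum of the admissible $q$, it is enough to show that this particular $q$ is admissible. Clearly $q\leq n\leq n+m$, and $q\geq 1$ because $n\geq 1$. So what remains is to check that
$$\binom{q-n}{2\ell}+\binom{q-m}{2\ell}\geq \binom{q}{2\ell}\quad\text{for all }1\leq \ell\leq \left\lfloor \frac{q}{2} \right\rfloor.$$
The binomial coefficients here must be read as generalized binomial coefficients $\binom{a}{k}=\frac{a(a-1)\cdots(a-k+1)}{k!}$, which is necessarily the convention in Theorem \ref{suc}, since $q-n$ is typically negative.

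The key tool is the reflection identity $\binom{-a}{k}=(-1)^k\binom{a+k-1}{k}$. For an even lower index $k=2\ell$ it gives $\binom{-a}{2\ell}=\binom{a+2\ell-1}{2\ell}\geq 0$ for every integer $a\geq 0$. The argument then has two steps.

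\emph{The $m$-term is nonnegative.} If $q-m\geq 0$, then $\binom{q-m}{2\ell}$ is an ordinary binomial coefficient and hence $\geq 0$. If $q-m<0$, the even-index reflection shows it is a positive ordinary binomial coefficient. Either way $\binom{q-m}{2\ell}\geq 0$, so this term can be dropped.

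\emph{The $n$-term dominates $\binom{q}{2\ell}$.} Put $a=n-q=\left\lfloor \frac{n}{2} \right\rfloor\geq 0$. Reflection gives $\binom{q-n}{2\ell}=\binom{a+2\ell-1}{2\ell}$. Since $\left\lfloor \frac{n}{2} \right\rfloor\geq \left\lceil \frac{n}{2} \right\rceil-1$, we have $a\geq q-1$. Using $\ell\geq 1$,
$$a+2\ell-1\geq q+2\ell-2\geq q.$$
For nonnegative integer upper entries, $\binom{N}{2\ell}$ is nondecreasing in $N$, so $\binom{a+2\ell-1}{2\ell}\geq \binom{q}{2\ell}$. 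Adding the nonnegative $m$-term gives the required inequality for every $\ell$.

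I expect no real obstacle here; the only delicate point is the sign and convention bookkeeping for the negative upper arguments. The even lower index $2\ell$ is exactly what makes both reflected terms positive. The inequality $\left\lfloor \frac{n}{2} \right\rfloor\geq \left\lceil \frac{n}{2} \right\rceil-1$, together with $\ell\geq1$, absorbs the parity of $n$, which is why the bound comes out as $\left\lceil \frac{n}{2} \right\rceil$.
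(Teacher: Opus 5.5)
Your proof is correct and follows the paper's argument. You take $q=\left\lceil \frac{n}{2}\right\rceil$ in Theorem \ref{suc}, drop the nonnegative term $\binom{q-m}{2\ell}$, rewrite $\binom{q-n}{2\ell}=\binom{n-q+2\ell-1}{2\ell}$, and compare with $\binom{q}{2\ell}$ by monotonicity. You are slightly more careful than the paper, which writes $n-q=\left\lceil \frac{n}{2}\right\rceil$ instead of $\left\lfloor \frac{n}{2}\right\rfloor$ and leaves the nonnegativity of the $m$-term implicit; your inequality $\left\lfloor \frac{n}{2}\right\rfloor\geq q-1$ combined with $\ell\geq 1$ is exactly what makes that step valid.
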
 

\begin{proof}
Let $q:=\left\lceil \frac{n}{2} \right\rceil$. For any $1\leq \ell\leq \left\lfloor \frac{q}{2} \right\rfloor$, we have that
\begin{align*}
\binom{q-n}{2\ell}+\binom{q-m}{2\ell} &\geq \binom{q-n}{2\ell} = \binom{n-q+2\ell-1}{2\ell} 
 = \binom{\left\lceil \frac{n}{2} \right\rceil+2\ell-1}{2\ell} \\  
& \geq \binom{\left\lceil \frac{n}{2} \right\rceil+1}{2\ell} \geq \binom{\left\lceil \frac{n}{2} \right\rceil}{2\ell}=\binom{q}{2\ell}.
\end{align*}
Hence, the conclusion follows from Theorem \ref{suc}.
\end{proof}

\section{Main results}

Let $S_{n,m}=K[x_1,\ldots,x_n,y_1,\ldots,y_m]$ and $I_{n,m}=(x_1,\ldots,x_n)\cap(y_1,\ldots,y_m)\subset S_{n,m}$. For convenience,
we denote $$h(n,m):=\hdepth(S_{n,m}/I_{n,m}).$$
Without further ado, we prove the following result:

\begin{teor}\label{t1}
We have that:
\begin{enumerate}
\item[(1)] If $n=2s$ and $m=s+t$, such that $t\in \left( \frac{1-\sqrt{1+8s}}{2}, \frac{1+\sqrt{1+8s}}{2} \right)$, then $h(n,m)=s$.
\item[(2)] If $n=2s+1$ and $m=s+1+t$, such that $t\in \left( \frac{1-\sqrt{9+16s}}{2}, \frac{1+\sqrt{9+16s}}{2} \right)$, then $h(n,m)=s+1$.
\end{enumerate}
\end{teor}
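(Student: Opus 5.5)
Since Corollary \ref{minim} already gives $h(n,m)\geq \left\lceil \frac{n}{2} \right\rceil$, it suffices to prove the upper bound $h(n,m)\leq \left\lceil \frac{n}{2} \right\rceil$. By Theorem \ref{suc}, this means that $q=\left\lceil \frac{n}{2} \right\rceil+1$ must violate the defining inequality for some $\ell$. I will only use $\ell=1$, i.e. show
$$\binom{q-n}{2}+\binom{q-m}{2}<\binom{q}{2}.$$
A subtlety: the maximum in Theorem \ref{suc} is over $q$, and the condition need not be monotone in $q$. So I would also check that every larger $q\le n+m$ fails, presumably again at $\ell=1$. For $\ell=1$ everything is an explicit quadratic in $q$, so this looks manageable.

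Step 1, $n=2s$. Take $q=s+1$, with the convention $\binom{a}{2}=a(a-1)/2$ for all integers $a$ (this is the one used for negative tops, $\binom{q-n}{2\ell}=\binom{n-q+2\ell-1}{2\ell}$). Then $q-n=1-s$, so $\binom{1-s}{2}=\frac{s(s-1)}{2}$. Also $q-m=1-t$, so $\binom{1-t}{2}=\frac{t(t-1)}{2}$. Finally $\binom{s+1}{2}=\frac{s(s+1)}{2}$. The required strict inequality becomes
$$s(s-1)+t(t-1)<s(s+1),\quad\text{i.e.}\quad t^2-t-2s<0.$$
This holds exactly for $t\in\left(\frac{1-\sqrt{1+8s}}{2},\frac{1+\sqrt{1+8s}}{2}\right)$, which is the hypothesis.

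Step 2, $n=2s+1$. Take $q=s+2$. Then $q-n=1-s$, $q-m=1-t$, and $\binom{s+2}{2}=\frac{(s+2)(s+1)}{2}$. The condition becomes
$$s(s-1)+t(t-1)<(s+1)(s+2),\quad\text{i.e.}\quad t^2-t-(4s+2)<0.$$
Its roots are $\frac{1\pm\sqrt{9+16s}}{2}$, which matches the stated interval.

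Step 3, the main obstacle: excluding every $q\ge \left\lceil \frac{n}{2} \right\rceil+2$. For general $q$, the $\ell=1$ condition reads
$$f(q):=(n-q+1)(n-q)+(q-m)(q-m-1)-q(q-1)\ge 0,$$
valid for $q\le m$ and also beyond, by the uniform convention. Expanding, $f$ is a quadratic in $q$ with leading coefficient $1$. The plan is to show $f(q)<0$ for all $q$ between $\left\lceil \frac{n}{2} \right\rceil+1$ and about $n$, by checking both endpoints. Since $f$ is convex, negativity at the two endpoints gives negativity on the whole interval.

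For $q$ near $n+m$ I would switch to a different argument. Here $\binom{q-n}{2}$ and $\binom{q-m}{2}$ are genuine binomials with small tops, and $\binom{q}{2}$ clearly dominates. Alternatively, the upper bound of \cite[Theorem 2.6]{lucrare3}, $\left\lceil n+m+\frac12-\sqrt{2nm+\frac14}\right\rceil$, should already cap $q$ below $n$ in this regime, which handles the large values at once. I expect the bookkeeping at the right endpoint of the convexity argument to be the fiddliest part.
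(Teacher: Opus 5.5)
Your Steps 1 and 2 are exactly the paper's proof: you take the lower bound from Corollary \ref{minim}, then show the $\ell=1$ inequality fails at $q=\lceil n/2\rceil+1$, which reduces to $t^2-t-2s<0$, respectively $t^2-t-4s-2<0$. Your Step 3, which the paper omits, is unnecessary, because the admissible $q$ form a downward-closed set: $\beta_k^{q-1}=\sum_{i\le k}\beta_i^q$, so if all $\beta_k^{q}\ge 0$ then all $\beta_k^{q-1}\ge 0$. If you want Step 3 anyway, it is immediate: $f$ has roots $n+m+\frac12\pm\sqrt{2nm+\frac14}$, the larger of which exceeds $n+m$, so $f(q_0)<0$ gives $f(q)<0$ for every $q_0\le q\le n+m$, with no need to split the range.
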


\begin{proof}
(1) According to Corollary \ref{minim}, we have that $h(n,m)\geq s$.
    Let $q=s+1$ and $\ell=1$. Then:
		$$ \binom{q-n}{2\ell}+\binom{q-m}{2\ell} = \binom{1-s}{2}+\binom{1-t}{2} < \binom{s+1}{2}, $$
		if and only if $$(1-s)(-s)+(1-t)(-t) < s(s+1),$$ which is equivalent to $$t^2-t-2s<0.$$ 
		Now, the conclusion follows from Theorem \ref{suc}.
		
(2) According to Corollary \ref{minim}, we have that $h(n,m)\geq s+1$.
    Let $q=s+2$ and $\ell=1$. Then:
		$$ \binom{q-n}{2\ell}+\binom{q-m}{2\ell} = \binom{1-s}{2}+\binom{1-t}{2} < \binom{s+2}{2}, $$
		if and only if $$(1-s)(-s)+(1-t)(-t) < (s+1)(s+2),$$ which is equivalent to $$t^2-t-4s-2<0.$$ Now, the conclusion
		follows from Theorem \ref{suc}.
\end{proof}

\begin{teor}\label{t2}
With the above notations, we have that:
$$ \left\lfloor \frac{n}{2} + \sqrt{\left\lfloor \frac{n}{2}\right\rfloor \ln 2} \right\rfloor - 1 \leq h(n,n) \leq 
   \left\lfloor \frac{n+1}{2} + \sqrt{\left\lfloor \frac{n}{2}\right\rfloor \ln 2} \right\rfloor,\text{ for all }n\geq 2.$$
\end{teor}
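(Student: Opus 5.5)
The plan is to specialise Theorem \ref{suc} to $m=n$ and turn its condition into a product estimate. For $q<n$, which we may assume since Theorem \ref{suc} gives $h(n,n)<n$, put $p:=n-q\geq 1$ and $r:=q-p=2q-n$. Then $\binom{q-n}{2\ell}=\binom{p+2\ell-1}{2\ell}$, and the condition for $q$ becomes $2\binom{p+2\ell-1}{2\ell}\geq \binom{q}{2\ell}$. Reindexing the denominator, this is equivalent to
\begin{equation*}
P_\ell(q):=\prod_{i=0}^{2\ell-1}\frac{q-i}{p+i}=\prod_{i=0}^{2\ell-1}\Bigl(1+\frac{r-2i}{p+i}\Bigr)\leq 2 \quad\text{for all } 1\leq \ell\leq \left\lfloor \tfrac{q}{2}\right\rfloor .
\end{equation*}
So $h(n,n)$ is the largest $q$ with $\max_\ell P_\ell(q)\leq 2$. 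The factors are $\geq 1$ exactly while $2i\leq r$. Hence $\ell\mapsto P_\ell(q)$ increases up to $2\ell\approx r$ and then decreases, and the relevant maximum is at $2\ell\approx r$ or $r+1$. Heuristically $\ln P_{\max}\approx \frac{1}{p}\sum_{0\leq i\leq r/2}(r-2i)\approx \frac{r^2}{4p}$. With $p\approx n/2$, the threshold $r^2/(4p)=\ln 2$ gives $q\approx \frac n2+\sqrt{\lfloor n/2\rfloor\ln 2}$, which is the shape of the statement.

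For the lower bound, take $q_0=\left\lfloor \frac{n}{2} + \sqrt{\lfloor n/2\rfloor \ln 2}\right\rfloor - 1$ and check $P_\ell(q_0)\leq 2$ for every $\ell$. If $r\leq 0$ (small $n$), Corollary \ref{minim} already suffices. Otherwise use $1+x\leq e^x$ factor by factor. Bounding each denominator below by $p$ is only legitimate for nonnegative terms, so I pair each negative term with a positive one. For $i$ and $i'=r-i$ (roughly) the factor product $\frac{(q-i)(q-i')}{(p+i)(p+i')}$ is at most $1$ beyond the midpoint. This gives $\ln P_\ell\leq \frac1p\sum_{0\le i<r/2}(r-2i)\leq \frac{r^2+2r}{4p}$ or a similar closed form. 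The "$-1$" in $q_0$ makes $r$ smaller by $2$ than at the threshold, and $p=n-q_0\geq \lfloor n/2\rfloor$. Together these should absorb the linear error term and give $\frac{r^2+2r}{4p}\leq \ln 2$. Parity of $n$ is handled by writing $n=2s$ or $2s+1$ and $p\geq s$.

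For the upper bound, let $Q=\left\lfloor \frac{n+1}{2} + \sqrt{\lfloor n/2\rfloor \ln 2}\right\rfloor$. For every $q$ with $Q<q<n$ I must exhibit an $\ell$ with $P_\ell(q)>2$. I choose $2\ell$ to be $r$ or $r+1$, whichever is even, and bound $\ln P_\ell$ from below using $\ln(1+x)\geq \frac{x}{1+x}$, i.e. $\ln\frac{q-i}{p+i}\geq \frac{r-2i}{q-i}$. This yields $\ln P_\ell\geq \sum_i\frac{r-2i}{q-i}$, with a pairing of the symmetric terms (when $2\ell=r+1$ the middle term is $\pm1/\cdot$ and harmless). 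The denominators are now about $q$ rather than $p$, so the lower bound is like $\frac{r^2}{4q}$ up to corrections. The extra $+\frac12$ and the missing $-1$ in $Q$ relative to $q_0$ are what must absorb this. Monotonicity in $q$ (a larger $r$ and a smaller $p$ both increase $P_\ell$) means it suffices to treat $q=Q+1$, provided $\ell\leq \lfloor q/2\rfloor$ holds, which is clear since $r<q$.

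The main obstacle is this upper-bound estimate. Losing $p$ versus $q$ in the denominators costs a factor $1+O(r/n)$, i.e. $O(1)$ in $r^2/(4p)$ since $r\sim\sqrt n$, and it is delicate whether the slack built into $Q$ suffices. If it does not, I would use the sharper $\ln(1+x)\geq x-\frac{x^2}{2}$ with denominators $p+i$ and bound the quadratic error by $\sum (r-2i)^2/(2p^2)=O(r^3/p^2)=O(n^{-1/2})$. I would check small $n$ directly from Theorem \ref{suc}.
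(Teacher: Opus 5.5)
\textbf{Gap in the upper bound: the choice of $\ell$ is wrong.} The index $i$ in $P_\ell(q)$ runs only up to $2\ell-1$, and the factors $\frac{q-i}{p+i}$ are $\ge 1$ only for $i\le r/2$. So $\ell\mapsto P_\ell(q)$ peaks at $2\ell-1\approx r/2$, not at $2\ell\approx r$. Your heuristic sum over $0\le i\le r/2$ is right; the location you state for the peak is not.

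In fact, since $q-r=p$ and $p+r=q$, the factor at $r-i$ is $\frac{p+i}{q-i}$, the reciprocal of the factor at $i$. This is exactly the ``pairing'' you invoke in the lower bound, and it gives $\prod_{i=0}^{r}\frac{q-i}{p+i}=1$. Your choice $2\ell\in\{r,r+1\}$ therefore yields $P_\ell(q)=q/p$ or $P_\ell(q)=1$. That is $1+O(n^{-1/2})$, and it never exceeds $2$. Correspondingly, the symmetric pairs in $\sum_i\frac{r-2i}{q-i}$ contribute $(r-2i)\bigl(\frac{1}{q-i}-\frac{1}{p+i}\bigr)<0$, so no bound ``like $r^2/(4q)$'' comes out.

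Concretely, for $n=100$ one has $Q=56$. At $q=57$ (so $p=43$, $r=14$), your $\ell=7$ gives $P_7=57/43<2$. By contrast, $\ell=4$ gives $P_4=\prod_{i=0}^{7}\frac{57-i}{43+i}\approx 3.08$. As written, your upper-bound argument exhibits no violating $\ell$.

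\textbf{The repair.} Take $2\ell-1$ to be the largest odd integer $\le\lfloor r/2\rfloor$; this is the paper's $\ell=s$ or $s+1$. Then every term is nonnegative, and your crude estimate $\ln P_\ell\ge\frac1q\sum_{i=0}^{2\ell-1}(r-2i)$ already suffices. The second-order fallback is unnecessary: the $p$-versus-$q$ loss is $O(n^{-1/2})$, not $O(1)$, and the shift of $q$ by one beats it.
\begin{itemize}
\item For $n=2k$, put $c=\lfloor\frac12+\sqrt{k\ln2}\rfloor$ and $q=k+c+1$. This gives $\ln P_\ell\ge\frac{(c+1)(c+2)}{k+c+1}$. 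Using $k\ln2<(c+\frac12)^2$, the condition $\ln P_\ell>\ln2$ reduces to $(c+1)\ln2<2c+\frac74$, which holds.
\item For $n=2k+1$, put $d=\lfloor\sqrt{k\ln2}\rfloor$ and $q=k+d+2$. The worse parity gives $\frac{(d+1)(d+3)}{k+d+2}$, and the condition reduces to $(d+2)\ln2<2(d+1)$, which holds.
\end{itemize}

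\textbf{A false step in the lower bound (not fatal).} Your claim $p=n-q_0\ge\lfloor n/2\rfloor$ is false: for $n=2k$ and $a=\lfloor\sqrt{k\ln2}\rfloor$ one has $p=k-a+1$. The argument survives anyway. Dropping the factors $\le1$ (no pairing is needed) bounds $\ln P_\ell$ by $\frac{a(a-1)}{k-a+1}$ for every $\ell$. Then $a^2\le k\ln2$ reduces $\frac{a(a-1)}{k-a+1}\le\ln2$ to $(a-1)\ln2\le a$. The odd case reduces to $(b-2)\ln2\le b-\frac34$ with $b=\lfloor\sqrt{k\ln2}+\frac12\rfloor$.

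This uniform treatment of all $\ell$ is actually more complete than the paper, which only asserts that $\ell=s$ is the worst case. Once $\ell$ is corrected, your argument is essentially the paper's: its inequalities $(1+\frac1x)^x<e<(1+\frac1x)^{x+1}$ are your $1+x\le e^x$ and $\ln(1+x)\ge\frac{x}{1+x}$.
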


\begin{proof}
Let $q:=\left\lfloor \frac{n}{2} + \sqrt{\frac{n\ln 2}{2}} \right\rfloor-1$.
We prove the first inequality.
\begin{enumerate}
\item[(a)] Case $n=2k$ and $q-k=2s-1$. We have that $q=k+\left\lfloor \sqrt{k\ln 2} \right\rfloor-1$. 
           Since $\left\lfloor \sqrt{k\ln 2} \right\rfloor=2s$, it follows that
					 \begin{equation}\label{uc1}
					 k \geq \frac{4s^2}{\ln 2}.
					 \end{equation}            
           According to Theorem \ref{suc},
           in order to show that $h(n,n)\geq q$, it is enough to show that
					 \begin{equation}\label{qq1}
					 2\binom{n-q+2\ell-1}{2\ell} = 2\binom{k-2s+2\ell}{2\ell} \geq \binom{k+2s-1}{2\ell},
					 \text{ for all }1\leq \ell \leq \left\lfloor \frac{q}{2} \right\rfloor.
					 \end{equation}
					 In fact, it is enough to show \eqref{qq1} for $\ell=s$, that is 
					 \begin{equation}\label{qq2}
					 2\binom{k}{2s}\geq \binom{k+2s-1}{2s},
					 \end{equation}
					 as it follows easily from that all the other cases.
					
           Note that \eqref{qq2} is equivalent to
					 \begin{equation}\label{qq3}
					 \prod_{j=1}^{2s-1} \left( 1+ \frac{2s}{k-j} \right) \leq 2.
					 \end{equation}
					 Using the fact that $\left(1+\frac{1}{x}\right)^{x}<e$, for $x>0$, from \eqref{uc1} it follows that
					 \begin{align*}
					 \prod_{j=1}^{2s-1} \left( 1+ \frac{2s}{k-j} \right) & \leq \left( 1+ \frac{2s}{k-2s+1} \right)^{2s-1} 
					 \leq \left( 1+ \frac{2s\ln 2}{4s^2-(2s-1)\ln 2} \right)^{2s-1} \\
					  & = \left( \left( 1+ \frac{2s\ln 2}{4s^2-(2s-1)\ln 2} \right)^{\frac{4s^2-(2s-1)\ln 2}{2s\ln 2}} \right)^
						{\ln 2\cdot \frac{4s^2-2s}{4s^2-(2s-1)\ln 2}} \\
						& < \left(e^{\ln 2} \right)^{\frac{4s^2-2s}{4s^2-(2s-1)\ln 2}} = 2^{\frac{4s^2-2s}{4s^2-2s+1}}<2.				
					 \end{align*}
					 Hence \eqref{qq3} holds, as required.
					
\item[(b)] Case $n=2k$ and $q-k=2s$. We have that $q=k+\left\lfloor \sqrt{k\ln 2} \right\rfloor-1$. 
           Since $\left\lfloor \sqrt{k\ln 2} \right\rfloor=2s+1$, it follows that
					 \begin{equation}\label{uc11}
					 k \geq \frac{(2s+1)^2}{\ln 2}.
					 \end{equation}            
           According to Theorem \ref{suc},
           in order to show that $h(n,n)\geq q$, it is enough to show that
					 \begin{equation}\label{qq11}
					 2\binom{n-q+2\ell-1}{2\ell} = 2\binom{k-2s+2\ell-1}{2\ell} \geq \binom{k+2s}{2\ell},
					 \text{ for all }1\leq \ell \leq \left\lfloor \frac{q}{2} \right\rfloor.
					 \end{equation}
					 In fact, it is enough to show \eqref{qq1} for $\ell=s$, that is 
					 \begin{equation}\label{qq22}
					 2\binom{k-1}{2s}\geq \binom{k+2s}{2s},
					 \end{equation}
					 as it follows easily from that all the other cases.
					
           Note that \eqref{qq22} is equivalent to
					 \begin{equation}\label{qq33}
					 \prod_{j=1}^{2s} \left( 1+ \frac{2s+1}{k-j} \right) \leq 2.
					 \end{equation}
					 Using the fact that $\left(1+\frac{1}{x}\right)^{x}<e$, for $x>0$, from \eqref{uc11} it follows that
					 \begin{align*}
					 \prod_{j=1}^{2s} \left( 1+ \frac{2s+1}{k-j} \right) & \leq \left( 1+ \frac{2s+1}{k-2s} \right)^{2s} 
					 \leq \left( 1+ \frac{(2s+1)\ln 2}{(2s+1)^2- 2s \ln 2} \right)^{2s} \\
					  & = \left( \left( 1+ \frac{(2s+1)\ln 2}{(2s+1)^2- 2s \ln 2} \right)^{\frac{(2s+1)^2- 2s \ln 2}{(2s+1)\ln 2}} \right)^
						{\ln 2\cdot \frac{4s^2+2s}{4s^2+4s+1-2s\ln 2}} \\
						& < \left(e^{\ln 2} \right)^{\frac{4s^2+2s}{4s^2+2s+1}} = 2^{\frac{4s^2+2s}{4s^2+2s+1}}<2.				
					 \end{align*}
					 Hence \eqref{qq33} holds, as required.
	
\item[(c)] Case $n=2k+1$ and $q-k=2s$. We have that $q=k+\left\lfloor \sqrt{k\ln 2} - 0.5 \right\rfloor$. 
           Since $\left\lfloor \sqrt{k\ln 2} -0.5 \right\rfloor=2s$, it follows that
					 \begin{equation}\label{uc111}
					 k \geq \frac{(2s+0.5)^2}{\ln 2}.
					 \end{equation}            
           According to Theorem \ref{suc},
           in order to show that $h(n,n)\geq q$, it is enough to show that
					 \begin{equation}\label{qq111}
					 2\binom{n-q+2\ell-1}{2\ell} = 2\binom{k-2s+2\ell}{2\ell} \geq \binom{k+2s}{2\ell},
					 \text{ for all }1\leq \ell \leq \left\lfloor \frac{q}{2} \right\rfloor.
					 \end{equation}
					 In fact, it is enough to show \eqref{qq111} for $\ell=s$, that is 
					 \begin{equation}\label{qq222}
					 2\binom{k}{2s}\geq \binom{k+2s}{2s},
					 \end{equation}
					 as it follows easily from that all the other cases.
					
           Note that \eqref{qq222} is equivalent to
					 \begin{equation}\label{qq333}
					 \prod_{j=0}^{2s-1} \left( 1+ \frac{2s}{k-j} \right) \leq 2.
					 \end{equation}
					 Using the fact that $\left(1+\frac{1}{x}\right)^{x}<e$, for $x>0$, from \eqref{uc111} it follows that
					 \begin{align*}
					 \prod_{j=0}^{2s-1} \left( 1+ \frac{2s}{k-j} \right) & \leq \left( 1+ \frac{2s}{k-2s+1} \right)^{2s} 
					 \leq \left( 1+ \frac{2s\ln 2}{(2s+0.5)^2-(2s-1)\ln 2} \right)^{2s} \\
					  & =  \left( 1+ \frac{2s\ln 2}{4s^2+2s+0.25-2s\ln 2 + \ln 2} \right)^{2s} < 
						     \left( 1+ \frac{\ln 2}{2s} \right)^{2s}\\
						& = \left( \left( 1 + \frac{\ln 2}{2s} \right)^{\frac{2s}{\ln 2}} \right)^{\ln 2} < e^{\ln 2} = 2.				
					 \end{align*}
					 Hence \eqref{qq333} holds, as required.

\item[(d)] Case $n=2k+1$ and $q-k=2s+1$. We have that $q=k+\left\lfloor \sqrt{k\ln 2} - 0.5 \right\rfloor$. 
           Since $\left\lfloor \sqrt{k\ln 2} -0.5 \right\rfloor=2s+1$, it follows that
					 \begin{equation}\label{uc1111}
					 k \geq \frac{(2s+1.5)^2}{\ln 2}.
					 \end{equation}            
           According to Theorem \ref{suc},
           in order to show that $h(n,n)\geq q$, it is enough to show that
					 \begin{equation}\label{qq1111}
					 2\binom{n-q+2\ell-1}{2\ell} = 2\binom{k-2s+2\ell-1}{2\ell} \geq \binom{k+2s+1}{2\ell},
					 \text{ for all }1\leq \ell \leq \left\lfloor \frac{q}{2} \right\rfloor.
					 \end{equation}
					 In fact, it is enough to show \eqref{qq1111} for $\ell=s$, that is 
					 \begin{equation}\label{qq2222}
					 2\binom{k-1}{2s}\geq \binom{k+2s+1}{2s},
					 \end{equation}
					 as it follows easily from that all the other cases.
					
           Note that \eqref{qq2222} is equivalent to
					 \begin{equation}\label{qq2333}
					 \prod_{j=1}^{2s} \left( 1+ \frac{2s+2}{k-j} \right) \leq 2.
					 \end{equation}
					 Using the fact that $\left(1+\frac{1}{x}\right)^{x}<e$, for $x>0$, from \eqref{uc1111} it follows that
					 \begin{align*}
					 \prod_{j=1}^{2s} \left( 1+ \frac{2s+2}{k-j} \right) & \leq \left( 1+ \frac{2s+2}{k-2s} \right)^{2s} 
					 \leq \left( 1+ \frac{(2s+2)\ln 2}{(2s+1.5)^2- 2s \ln 2} \right)^{2s} \\
					  & =  \left( 1+ \frac{(2s+2)\ln 2}{4s^2+6s+2.25-2s\ln 2 } \right)^{2s} < 
						     \left( 1+ \frac{\ln 2}{2s} \right)^{2s}\\
						& = \left( \left( 1 + \frac{\ln 2}{2s} \right)^{\frac{2s}{\ln 2}} \right)^{\ln 2} < e^{\ln 2} = 2.				
					 \end{align*}
					 Hence \eqref{qq333} holds, as required.				
\end{enumerate}

Let $q:=\left\lfloor \frac{n+1}{2} + \sqrt{\left\lfloor \frac{n}{2}\right\rfloor \ln 2} \right\rfloor+1$.
We prove the second inequality.
\begin{enumerate}
\item[(a)] Case $n=2k$ and $q-k=2s+1$. We have that $q=k+ \left\lfloor \frac{1}{2} + \sqrt{k\ln 2} \right\rfloor+1$.
            Since $\left\lfloor \frac{1}{2} + \sqrt{k\ln 2} \right\rfloor=2s$, it follows that
					 $$2s \leq  \frac{1}{2} + \sqrt{k\ln 2}  < 2s+1,$$
					 from which we deduce that 
					 \begin{equation}\label{suc1}
					 \frac{\left( 2s - 0.5 \right)^2}{\ln 2} \leq k<\frac{\left( 2s + 0.5 \right)^2}{\ln 2}.
					 \end{equation}
					 According to Theorem \ref{suc},
           in order to show that $h(n,n) < q$, it is enough to show that there exists $1\leq \ell \leq \left\lfloor \frac{q}{2} \right\rfloor$
					 such that
					 \begin{equation}\label{xq1}
					 2\binom{n-q+2\ell-1}{2\ell} = 2\binom{k-2s+2\ell-2}{2\ell} < \binom{k+2s+1}{2\ell}.
					 \end{equation}
					 We claim that \eqref{xq1} holds for $\ell=s+1$, that is
					 \begin{equation}\label{xq2}
					 2\binom{k}{2s+2} < \binom{k+2s+1}{2s+2}.
					 \end{equation}
					 Note that \eqref{xq2} is equivalent to 
					 \begin{equation}\label{xq3}
					 \prod_{j=1}^{2s+1} \left(1 + \frac{2s+2}{k-j} \right) > 2.
					 \end{equation}
					 Using the fact that $\left(1+\frac{1}{x}\right)^{x+1}>e$, for $x>0$, from \eqref{suc1} it follows that 
					 \begin{align*}
					 \prod_{j=1}^{2s+1} \left(1 + \frac{2s+2}{k-j} \right) &> \left(1 + \frac{2s+2}{k-1} \right)^{2s+1}
					   \geq \left(1 + \frac{(2s+2)\ln 2}{(2s+0.5)^2 -\ln 2} \right)^{2s+1} \\
						& > \left(1 + \frac{(s+1)\ln 2}{2s^2+s} \right)^{2s+1}\\
						&  = 
						    \left( \left(1 + \frac{(s+1)\ln 2}{2s^2+s} \right)^{\frac{2s^2+s+(s+1)\ln 2}{(s+1)\ln 2}} \right)^
								{\ln 2\cdot \frac{(2s+1)(s+1)}{2s^2+s+(s+1)\ln 2}} \\
								& > \left(e^{\ln 2}\right)^{\frac{2s^2+3s+1}{2s^2+s+(s+1)\ln 2}}
								> 2.
						\end{align*}
						Hence, \eqref{xq3} holds, as required.
					 
\item[(b)] Case $n=2k$ and $q-k=2s$. We have that $q=k+ \left\lfloor \frac{1}{2} + \sqrt{k\ln 2} \right\rfloor+1$.
            Since $\left\lfloor \frac{1}{2} + \sqrt{k\ln 2} \right\rfloor=2s-1$, it follows that
					 $$2s-1 \leq  \frac{1}{2} + \sqrt{k\ln 2}  < 2s,$$
					 from which we deduce that 
					 \begin{equation}\label{suc11}
					 \frac{\left( 2s - 1.5 \right)^2}{\ln 2} \leq k<\frac{\left( 2s - 0.5 \right)^2}{\ln 2}.
					 \end{equation}
					 According to Theorem \ref{suc},
           in order to show that $h(n,n) < q$, it is enough to show that there exists $1\leq \ell \leq \left\lfloor \frac{q}{2} \right\rfloor$
					 such that
					 \begin{equation}\label{xq11}
					 2\binom{n-q+2\ell-1}{2\ell} = 2\binom{k-2s+2\ell-1}{2\ell} < \binom{k+2s}{2\ell}.
					 \end{equation}
					 We claim that \eqref{xq11} holds for $\ell=s$, that is
					 \begin{equation}\label{xq22}
					 2\binom{k-1}{2s} < \binom{k+2s}{2s}.
					 \end{equation}
					 Note that \eqref{xq22} is equivalent to 
					 \begin{equation}\label{xq33}
					 \prod_{j=1}^{2s} \left(1 + \frac{2s+1}{k-j} \right) > 2.
					 \end{equation}
					 Using the fact that $\left(1+\frac{1}{x}\right)^{x+1}>e$, for $x>0$, from \eqref{suc11} it follows that 
					 \begin{align*}
					 \prod_{j=1}^{2s} \left(1 + \frac{2s+1}{k-j} \right) &> \left(1 + \frac{2s+1}{k-1} \right)^{2s}
					   \geq \left(1 + \frac{(2s+1)\ln 2}{(2s-0.5)^2 -\ln 2} \right)^{2s} \\
						& > \left(1 + \frac{(2s+1)\ln 2}{4s^2-2s} \right)^{2s}\\
						&  = \left( \left(1 + \frac{(2s+1)\ln 2}{4s^2-2s} \right)^{\frac{4s^2-2s+(2s+1)\ln 2}{(2s+1)\ln 2}} \right)^
								{\ln 2\cdot \frac{4s^2+2s}{4s^2-2s+(2s+1)\ln 2}} \\
								& > \left(e^{\ln 2} \right)^{\frac{4s^2+2s}{4s^2-2s+(2s+1)\ln 2}}
								> 2.
						\end{align*}
						\normalsize Hence, \eqref{xq33} holds, as required.
	
\item[(c)] Case $n=2k+1$ and $q-k=2s+2$. We have that $q=k+ \left\lfloor \sqrt{k\ln 2} \right\rfloor+2$.
            Since $\left\lfloor  \sqrt{k\ln 2} \right\rfloor=2s$, it follows that
					 $$2s \leq \sqrt{k\ln 2}  < 2s+1,$$
					 from which we deduce that 
					 \begin{equation}\label{suc111}
					 \frac{4s^2}{\ln 2} \leq k<\frac{\left( 2s+1 \right)^2}{\ln 2}.
					 \end{equation}
					 According to Theorem \ref{suc},
           in order to show that $h(n,n) < q$, it is enough to show that there exists $1\leq \ell \leq \left\lfloor \frac{q}{2} \right\rfloor$
					 such that
					 \begin{equation}\label{xq111}
					 2\binom{n-q+2\ell-1}{2\ell} = 2\binom{k-2s+2\ell-2}{2\ell} < \binom{k+2s+2}{2\ell}.
					 \end{equation}
					 We claim that \eqref{xq11} holds for $\ell=s+1$, that is
					 \begin{equation}\label{xq222}
					 2\binom{k}{2s+2} < \binom{k+2s+2}{2s+2}.
					 \end{equation}
					 Note that \eqref{xq222} is equivalent to 
					 \begin{equation}\label{xq333}
					 \prod_{j=0}^{2s+1} \left(1 + \frac{2s+2}{k-j} \right) > 2.
					 \end{equation}
					 Using the fact that $\left(1+\frac{1}{x}\right)^{x+1}>e$, for $x>0$, from \eqref{suc11} it follows that 
					 \begin{align*}
					 \prod_{j=0}^{2s+1} \left(1 + \frac{2s+2}{k-j} \right) &> \left(1 + \frac{2s+2}{k} \right)^{2s+2}
					   \geq \left(1 + \frac{(2s+2) \ln 2}{(2s+1)^2} \right)^{2s+2} \\
						&  = \left( \left(1 + \frac{(2s+2)\ln 2}{(2s+1)^2} \right)^{\frac{(2s+1)^2+(2s+2)\ln 2}{(2s+2) \ln 2}} \right)^
								{\ln 2\cdot \frac{(2s+2)^2}{4s^2+4s+1+(2s+2)\ln 2}} \\
								& > \left(e^{\ln 2} \right)^{\frac{4s^2+8s+4}{4s^2+6s+3}} = 2^{\frac{4s^2+8s+4}{4s^2+6s+3}} > 2.
						\end{align*}						
						Hence, \eqref{xq333} holds, as required.

\item[(d)] Case $n=2k+1$ and $q-k=2s+1$. We have that $q=k+ \left\lfloor \sqrt{k\ln 2} \right\rfloor+2$.
            Since $\left\lfloor  \sqrt{k\ln 2} \right\rfloor=2s-1$, it follows that
					 $$2s-1 \leq \sqrt{k\ln 2}  < 2s,$$
					 from which we deduce that 
					 \begin{equation}\label{suc1111}
					 \frac{(2s-1)^2}{\ln 2} \leq k<\frac{4s^2}{\ln 2}.
					 \end{equation}
					 According to Theorem \ref{suc},
           in order to show that $h(n,n) < q$, it is enough to show that there exists $1\leq \ell \leq \left\lfloor \frac{q}{2} \right\rfloor$
					 such that
					 \begin{equation}\label{xq1111}
					 2\binom{n-q+2\ell-1}{2\ell} = 2\binom{k-2s+2\ell-1}{2\ell} < \binom{k+2s+1}{2\ell}.
					 \end{equation}
					 We claim that \eqref{xq1111} holds for $\ell=s$, that is
					 \begin{equation}\label{xq2222}
					 2\binom{k-1}{2s} < \binom{k+2s+1}{2s}.
					 \end{equation}
					 Note that \eqref{xq2222} is equivalent to 
					 \begin{equation}\label{xq3333}
					 \prod_{j=1}^{2s} \left(1 + \frac{2s+2}{k-j} \right) > 2.
					 \end{equation}
					 Using the fact that $\left(1+\frac{1}{x}\right)^{x+1}>e$, for $x>0$, from \eqref{suc11} it follows that 
					 \begin{align*}
					 \prod_{j=1}^{2s} \left(1 + \frac{2s+2}{k-j} \right) &> \left(1 + \frac{2s+2}{k} \right)^{2s}
					   \geq \left(1 + \frac{(s+1) \ln 2}{2s^2} \right)^{2s} \\
						&  = \left( \left(1 + \frac{(s+1)\ln 2}{2s^2} \right)^{\frac{2s^2+(s+1)\ln 2}{(s+1) \ln 2}} \right)^
								{\ln 2\cdot \frac{2s^2+2s}{2s^2+(s+1)\ln 2}} \\
								& > \left(e^{\ln 2} \right)^{\frac{2s^2+2s}{2s^2+(s+1)\ln 2}} = 2^{\frac{2s^2+2s}{2s^2+(s+1)\ln 2}} > 2.
						\end{align*}						
						Hence, \eqref{xq3333} holds, as required.
					
\end{enumerate}

\end{proof}

\begin{teor}\label{t3}
With the above notations, we have that:
\begin{enumerate}
\item[(1)] $h(n,m)\geq h(n,m')$, for all $1\leq m\leq m'\leq \left\lfloor \frac{n}{2} \right\rfloor$.
\item[(2)] $h(n,m)\leq h(n,m')$, for all $\left\lceil \frac{n}{2} \right\rceil \leq m\leq m'\leq n$.
\item[(3)] $h(n,\left\lfloor \frac{n}{2} \right\rfloor)=h(n,\left\lceil \frac{n}{2} \right\rceil)=\left\lceil \frac{n}{2} \right\rceil$.
\item[(4)] $h(n,m)\geq h(n,n-m)$, for all $1\leq m\leq \left\lfloor \frac{n}{2} \right\rfloor$.
\item[(5)] $\left\lfloor \frac{n}{2} \right\rfloor\leq h(n,m)\leq h(n,1)$, for all $1\leq m\leq n$.
\end{enumerate}
\end{teor}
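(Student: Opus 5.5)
The tool throughout is the characterization in Theorem \ref{suc}. Since $h(n,m)$ is the \emph{maximum} admissible $q$, to show $h(n,m)\geq h(n,m')$ it suffices to set $q:=h(n,m')$, use that $q$ is admissible for $(n,m')$, and check admissibility for $(n,m)$. The term $\binom{q-n}{2\ell}$ and the right-hand side $\binom{q}{2\ell}$ do not involve $m$. So everything reduces to comparing $\binom{q-m}{2\ell}$ with $\binom{q-m'}{2\ell}$.

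I will use three elementary facts about $f_\ell(x)=\binom{x}{2\ell}$ on integers:
\begin{itemize}
\item $f_\ell(x)=0$ for $0\leq x\leq 2\ell-1$, and $f_\ell(x)\geq 0$ for all integers $x$;
\item $f_\ell$ is nondecreasing for $x\geq 0$ and nonincreasing for $x\leq 2\ell-1$;
\item $f_\ell(x)=f_\ell(2\ell-1-x)$.
\end{itemize}
In particular, $f_\ell$ is nondecreasing on $x\geq 0$ and nonincreasing on $x\leq 1$, for every $\ell\geq 1$.

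For (1), let $m\leq m'\leq\lfloor n/2\rfloor$ and $q=h(n,m')$. Since $n\geq 2m'$, Theorem \ref{suc} gives $q\geq m'$. Hence $q-m\geq q-m'\geq 0$, so $f_\ell(q-m)\geq f_\ell(q-m')$, and $q$ is admissible for $(n,m)$.

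For (2), let $\lceil n/2\rceil\leq m\leq m'$ and $q=h(n,m)$. I claim $q\leq m+1$:
\begin{itemize}
\item if $n\leq 2m-2$, Theorem \ref{suc} gives $q<m$;
\item if $n\in\{2m-1,2m\}$, it gives $q\leq n-m+1\leq m+1$.
\end{itemize}
Thus $q-m'\leq q-m\leq 1$, so $f_\ell(q-m')\geq f_\ell(q-m)$ and $h(n,m')\geq q$.

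Part (3) is Theorem \ref{t1} with $t=0$, plus $t=-1$ for odd $n=2s+1$, where $m=s$. One checks that $-1>\frac{1-\sqrt{9+16s}}{2}$ for $s\geq 1$.

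Part (4) is the step I expect to be the main obstacle. Let $m\leq\lfloor n/2\rfloor$, $q=h(n,n-m)$, and put $a=q-m$, $b=q-n+m$. Then $a-b=n-2m\geq 0$, and by Corollary \ref{minim} $a+b=2q-n\geq 0$, so $a\geq |b|$.
\begin{itemize}
\item If $b\geq 0$, monotonicity gives $f_\ell(a)\geq f_\ell(b)$ and we are done.
\item If $b<0$, symmetry gives $f_\ell(b)=f_\ell(2\ell-1-b)$. This can exceed $f_\ell(a)$ unless $a\geq 2\ell-1-b$, i.e.\ unless $2q-n\geq 2\ell-1$.
\end{itemize}
So the remaining case is $\ell$ with $2\ell-1>2q-n$. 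I would first check whether the admissibility inequality for $(n,n-m)$ at such $\ell$, namely $f_\ell(q-n)+f_\ell(b)\geq f_\ell(q)$, directly implies the one with $f_\ell(a)$ in place of $f_\ell(b)$. In this range both $q-n$ and $b$ are negative, so both terms are the "large" negative-argument binomials, and a direct comparison may work. Failing that, I would bound $q$ from above via part (2) and the upper bound of Theorem \ref{suc} to show this range of $\ell$ is empty or harmless.

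For (5), the lower bound $h(n,m)\geq\lceil n/2\rceil\geq\lfloor n/2\rfloor$ is Corollary \ref{minim}. For the upper bound:
\begin{itemize}
\item if $m\leq\lfloor n/2\rfloor$, use (1) with $1\leq m$;
\item if $m\geq\lceil n/2\rceil$, apply (4) to $n-m\leq\lfloor n/2\rfloor$ to get $h(n,m)\leq h(n,n-m)$, and then use (1).
\end{itemize}
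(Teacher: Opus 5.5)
Parts (1)--(3) are correct and follow the paper. In (2) you are slightly more careful than the paper: you take $q=h(n,m)$, where the paper writes $h(n,m')$, and you only need $q\le m+1$, using that $\binom{x}{2\ell}$ is nonincreasing for $x\le 2\ell-1$. The paper asserts $q\le m$, which for $n=2m$ actually needs Theorem \ref{t1}.

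\textbf{The gap is in (4).} You leave open the case $b=q-n+m<0$ with $2\ell-1>2q-n$, and this is exactly where a new idea is needed.

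\emph{Comparing the $m$-dependent terms fails there.} For $n=10$, $m=4$, Theorem \ref{t1} gives $q=h(10,6)=5$, so $a=1$ and $b=-1$. Then $\binom{1}{2\ell}=0<1=\binom{-1}{2\ell}$ for both admissible values $\ell=1,2$.

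\emph{The range is not empty.} In the same example $2q-n=0$, so both of those $\ell$ lie in it, and no bound on $q$ can make it disappear.

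\emph{The missing observation.} In this range you do not need the hypothesis for $(n,n-m)$ at all, because the first summand alone dominates. By upper negation, $\binom{q-n}{2\ell}=\binom{n-q+2\ell-1}{2\ell}$. The condition $2\ell\ge 2q-n+1$ is exactly $n-q+2\ell-1\ge q$, so $\binom{q-n}{2\ell}\ge\binom{q}{2\ell}$. Since $\binom{q-m}{2\ell}\ge 0$, the inequality for $(n,m)$ holds outright. In the example, $\binom{-5}{2}=15\ge 10=\binom{5}{2}$.

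The paper's proof is precisely this split. The case $2\ell\ge 2q-n+1$ is handled as above, and the case $2\ell\le 2q-n$ by the comparison $\binom{q-n+m}{2\ell}\le\binom{q-m}{2\ell}$, which is your $b<0$ argument.

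\textbf{Consequences for (5).} Your (5) uses (4) for $m\ge\lceil n/2\rceil$, so it inherits this gap. There is also an edge case: for $m=n$ you would apply (4) with $n-m=0$, which is outside its range, since $h(n,0)$ is undefined. The paper's one-line proof of (5) overlooks this too.

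It is fixed by running the same two-case argument directly with $q=h(n,n)\le n-1$:
\begin{itemize}
\item for $2\ell\ge 2q-n+1$, the term $\binom{q-n}{2\ell}$ alone suffices;
\item for $2\ell\le 2q-n$, one has $0\le n-q+2\ell-1\le q-1$, so $\binom{q-n}{2\ell}\le\binom{q-1}{2\ell}$, hence $\binom{q-n}{2\ell}+\binom{q-1}{2\ell}\ge 2\binom{q-n}{2\ell}\ge\binom{q}{2\ell}$.
\end{itemize}
Thus $h(n,1)\ge h(n,n)$.
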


\begin{proof}
(1) Let $q=h(n,m')$. Note that, according to Corollary \ref{minim}, $q\geq \left\lfloor \frac{n}{2} \right\rfloor$.
    From Theorem \ref{suc}, it follows that
    $$\binom{q-n}{2\ell}+\binom{q-m'}{2\ell}\geq \binom{q}{2\ell},\text{ for all }1\leq \ell\leq \left\lfloor \frac{q}{2} \right\rfloor.$$
		Since $m\leq m'$, it follows that $\binom{q-m}{2\ell}\geq \binom{q-m'}{2\ell}$, for all $1\leq \ell\leq \left\lfloor \frac{q}{2} \right\rfloor$.
		Hence $$\binom{q-n}{2\ell}+\binom{q-m}{2\ell}\geq \binom{q}{2\ell},\text{ for all }1\leq \ell\leq \left\lfloor \frac{q}{2} \right\rfloor.$$
		Thus, from Theorem \ref{suc}, it follows that $h(n,m)\geq q$, as required.
		
(2) Let $q=h(n,m')$. Note that, according to Corollary \ref{minim}, $q\geq \left\lfloor \frac{n}{2} \right\rfloor$. On the other hand, according
    to Theorem \ref{suc}, we have that $q\leq m$ and
		$$\binom{q-n}{2\ell}+\binom{q-m}{2\ell}\geq \binom{q}{2\ell},\text{ for all }1\leq \ell\leq \left\lfloor \frac{q}{2} \right\rfloor.$$
		Since $m'\geq m\geq q$, it follows that 
		$$\binom{q-m}{2\ell} = \binom{m-q+2\ell-1}{2\ell} \leq \binom{m'-q+2\ell-1}{2\ell} = \binom{q-m'}{2\ell},\text{ for all }1\leq \ell\leq \left\lfloor \frac{q}{2} \right\rfloor.$$
		Therefore, from Theorem \ref{suc}, we get $h(n,m')\geq q$, as required.
		
(3) It follows immediately from Theorem \ref{t1}.

(4) Let $q=h(n,n-m)$. From Corollary \ref{minim} and Theorem \ref{suc} we have that 
    $\left\lfloor \frac{n}{2} \right\rfloor \leq q\leq n-m$ and
		\begin{equation}\label{cocos}
    \binom{q-n}{2\ell}+\binom{q-n+m}{2\ell}\geq \binom{q}{2\ell},\text{ for all }1\leq \ell\leq \left\lfloor \frac{q}{2} \right\rfloor.
	  \end{equation}
		In order to complete the proof, we have to show that
		\begin{equation}\label{wish1}
		\binom{q-n}{2\ell}+\binom{q-m}{2\ell}\geq \binom{q}{2\ell},\text{ for all }1\leq \ell\leq \left\lfloor \frac{q}{2} \right\rfloor.
		\end{equation}
		We choose an integer $\ell$ with $1\leq \ell\leq \left\lfloor \frac{q}{2} \right\rfloor$.
		
		If $2\ell\geq 2q-n+1$, then 
		$\binom{q-n}{2\ell} = \binom{n-q+2\ell-1}{2\ell}\geq \binom{q}{2\ell}$ and, therefore, \eqref{wish1} holds.
		Now, assume that $2\ell\leq 2q-n$. Then
		$$\binom{q-n+m}{2\ell} = \binom{n-m-q+2\ell-1}{2\ell} \leq \binom{q-m-1}{2\ell}\leq \binom{q-m}{2\ell}.$$
		From \eqref{cocos} it follows that \eqref{wish1} holds also.
		Hence, the conclusion follows from Theorem \ref{suc}.
    
(5) It follows from Corollary \ref{minim}, (1) and (4).
\end{proof}

We denote by $\mathbb N=\{1,2,3,\ldots\}$, the set of positive integers.

\begin{cor}\label{clim}
Let $m:\mathbb N \to \mathbb N$ be a function, such that $m(n)\leq n$, for all $n\geq 1$. Then
$$\lim_{n\to\infty} \frac{1}{n}h(n,m(n))=\frac{1}{2}.$$
\end{cor}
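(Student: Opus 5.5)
The plan is a squeeze argument. Theorem \ref{t3}(5) gives, for every $n$ and every $1\leq m(n)\leq n$,
$$\left\lfloor \frac{n}{2} \right\rfloor\leq h(n,m(n))\leq h(n,1).$$
The lower bound divided by $n$ tends to $\frac12$. So it suffices to show that $\frac{1}{n}h(n,1)\to \frac12$. This is exactly the asymptotic statement for $\hdepth(S_{n,1}/I_{n,1})$ recalled in the introduction from \cite{cipu}.

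For completeness I would rederive that limit from the two bounds quoted there. Fix $\varepsilon>0$ and let $A=A(\varepsilon)$ be as in the introduction. Then
$$h(n,1)\leq \left\lceil \frac{n}{2} \right\rceil + \lfloor \varepsilon n\rfloor + A - 2,$$
so $\limsup_{n} \frac{1}{n}h(n,1)\leq \frac12+\varepsilon$. Since $\varepsilon$ is arbitrary, $\limsup_n \frac1n h(n,1)\leq \frac12$.

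Combining this with $\frac1n\lfloor n/2\rfloor\to\frac12$ yields
$$\frac12\leq \liminf_n \frac1n h(n,m(n))\leq \limsup_n \frac1n h(n,m(n))\leq \frac12,$$
which proves the corollary.

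There is no real obstacle here, since everything rests on Theorem \ref{t3}(5) and the cited upper bound for $h(n,1)$. The only point to check is that the upper bound of \cite{cipu} holds for all $n$ with $A$ independent of $n$, which is how it is quoted. If one wanted to avoid citing it, the alternative would be the general upper bound $\left\lceil n+1+\frac12-\sqrt{2n+\frac14}\right\rceil$. That bound only gives $h(n,1)\lesssim n$, so it is too weak, and the \cite{cipu} bound is genuinely needed.
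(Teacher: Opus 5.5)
Your proposal is correct and is essentially the paper's proof: you squeeze $h(n,m(n))$ between $\left\lfloor \frac{n}{2}\right\rfloor$ and $h(n,1)$ using Theorem~\ref{t3}(5), then invoke $\frac{1}{n}h(n,1)\to\frac12$ from \cite{cipu}. The only difference is that you rederive the $\limsup$ from the quoted $\varepsilon$-dependent upper bound instead of citing \cite[Corollary 2.7]{cipu} directly, which is harmless.
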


\begin{proof}
According to \cite[Corollary 2.7]{cipu}, we have that 
$$\lim_{n\to\infty} \frac{1}{n}h(n,1)=\frac{1}{2}.$$
On the other hand, from Theorem \ref{t3}(5), we have that 
$$\left\lfloor \frac{n}{2} \right\rfloor\leq h(n,m(n))\leq h(n,1),\text{ for all }1\leq m\leq n.$$
The required conclusion follows immediately.
\end{proof}

\section{A conjecture and a sharper bound for $h(n,n)$}

Based on computer experiments, we proposed the following:

\begin{conj}\label{conj}
Let $s$ be a positive integer. Then:
\begin{enumerate}

\item[(a)] $\prod\limits_{j=1}^{2s} \left( 1 + \frac{2s+1}{k-j} \right) \leq 2$, for $k\geq \frac{(2s+0.5)^2}{\ln(2)} $. 

\item[(b)] $\prod\limits_{j=1}^{2s+1} \left( 1 + \frac{2s+2}{k-j} \right) \leq 2$, for $k \geq \frac{(2s+1.5)^2}{\ln(2)}$. 

\item[(c)] $\prod\limits_{j=0}^{2s-1} \left( 1 + \frac{2s}{k-j} \right) \leq 2$, for $k \geq \frac{4s^2}{\ln(2)}$.

\item[(d)] $\prod\limits_{j=2}^{2s+1} \left( 1 + \frac{2s+2}{k-j} \right) \leq 2$, for $k \geq \frac{(2s+1)^2}{\ln(2)}$.

\end{enumerate}
\end{conj}

\begin{prop}\label{pop}
If Conjecture \ref{conj} holds, then 
$$h(n,n)\geq \left\lfloor \frac{n-1}{2} + \sqrt{\left\lfloor \frac{n}{2}\right\rfloor \ln 2} \right\rfloor.$$
\end{prop}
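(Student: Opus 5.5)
Set $q:=\left\lfloor \frac{n-1}{2} + \sqrt{\left\lfloor \frac{n}{2}\right\rfloor \ln 2} \right\rfloor$. We prove $h(n,n)\geq q$ by checking the criterion of Theorem \ref{suc} with $m=n$. That is, we show
$$2\binom{q-n}{2\ell}=2\binom{n-q+2\ell-1}{2\ell}\geq \binom{q}{2\ell}\quad\text{for all } 1\leq \ell\leq \left\lfloor \tfrac{q}{2}\right\rfloor .$$
As in the proof of the lower bound in Theorem \ref{t2}, we split into four cases according to the parity of $n$ and of $q-k$, where $n=2k$ or $n=2k+1$. Writing $q-k$ as $2s$ or $2s\pm1$, we then locate $k$ through the floor condition. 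For $n=2k$ we have $q=k+\left\lfloor \sqrt{k\ln 2}-\tfrac12\right\rfloor$. For $n=2k+1$ we have $q=k+\left\lfloor\sqrt{k\ln 2}\right\rfloor$.

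In each case, the condition that the floor equals a given integer $r$ gives a lower bound $k\geq (r+c)^2/\ln 2$. Here $c=0.5$ when $n$ is even and $c=0$ when $n$ is odd. These are exactly the thresholds appearing in Conjecture \ref{conj}.

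\begin{itemize}
\item \emph{$n=2k$, $q-k=2s$.} Here $r=2s$ and $k\geq(2s+0.5)^2/\ln 2$. The critical $\ell$ is $s$, and the inequality becomes $2\binom{k-1}{2s}\geq\binom{k+2s}{2s}$. This is equivalent to $\prod_{j=1}^{2s}(1+\frac{2s+1}{k-j})\leq 2$, which is part (a) of the conjecture.
\item \emph{$n=2k$, $q-k=2s+1$.} Here $r=2s+1$ and $k\geq(2s+1.5)^2/\ln 2$. Taking $\ell=s+1$ gives $2\binom{k-1}{2s+2}\geq\binom{k+2s+1}{2s+2}$. This is equivalent to $\prod_{j=1}^{2s+1}(1+\frac{2s+2}{k-j})\leq 2$, which is part (b).
\item \emph{$n=2k+1$, $q-k=2s$.} Here $k\geq 4s^2/\ln 2$. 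Taking $\ell=s$ gives $2\binom{k}{2s}\geq\binom{k+2s}{2s}$, which is part (c).
\item \emph{$n=2k+1$, $q-k=2s+1$.} Here $k\geq(2s+1)^2/\ln 2$. Taking $\ell=s+1$ gives $2\binom{k-1}{2s+2}\geq \binom{k+2s+1}{2s+2}$. After cancelling common factors, I expect this to reduce to part (d), with the product running over $j=2,\dots,2s+1$. I would verify the exact index range by writing both binomials as falling factorials and pairing the factors.
\end{itemize}

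The remaining point, in each case, is that the inequality for the single ``critical'' $\ell$ implies it for all $1\leq\ell\leq\lfloor q/2\rfloor$. To see this, write $2\binom{n-q+2\ell-1}{2\ell}\geq\binom{q}{2\ell}$ as $\prod_{i}\frac{n-q+2\ell-1-i}{q-i}\cdot 2\geq 1$.

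\begin{itemize}
\item For $2\ell$ large, namely $2\ell\geq 2q-n+1$, the inequality is trivial because the top argument on the left is already at least $q$. This is the same observation used in Theorem \ref{t3}(4).
\item For $2\ell\leq 2q-n$, compare consecutive $\ell$ by taking the ratio of the two sides at $\ell$ and $\ell+1$. One expects the quotient $\binom{q}{2\ell}/\binom{n-q+2\ell-1}{2\ell}$ to be unimodal in $\ell$, with its maximum at $2\ell\approx 2q-n$, that is, at the critical $\ell$ chosen above.
\end{itemize}

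This monotonicity step is the one the paper also dismissed as ``easy''. I expect it to be the main technical obstacle, and I would handle it first. Set $d=2q-n$, which equals $2s$ or $2s+1$ up to a small shift. Then the ratio $\binom{q}{2\ell}\big/\binom{q-d+2\ell-1}{2\ell}$ equals $\prod_{i=0}^{2\ell-1}\frac{q-i}{q-d+2\ell-1-i}$. Each factor has the form $1+\frac{d+1-2\ell}{\cdots}$, which is at least $1$ exactly while $2\ell\leq d+1$.

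\begin{itemize}
\item Passing from $\ell$ to $\ell+1$ adds two new factors. While $2\ell+2\leq d+1$, these are at least $1$.
\item The old factors change in the same direction only when $d+1-2\ell$ decreases. So we must check, via a direct ratio computation, that the net effect of the two new factors outweighs the loss in the old factors up to the critical $\ell$.
\item Beyond the critical $\ell$ the factors drop below $1$, so the quotient decreases and the inequality only gets easier.
\end{itemize}

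With this, each case reduces to the corresponding item of Conjecture \ref{conj}, and the proposition follows from Theorem \ref{suc}.
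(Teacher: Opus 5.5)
Your skeleton is the paper's: the same $q$, the same four cases, the same single $\ell$ ($s$, $s+1$, $s$, $s+1$), and the same reduction to the four parts of Conjecture \ref{conj}. The paper simply asserts that this one $\ell$ suffices, but the argument you sketch for that step is wrong.

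\textbf{The peak location.} You conflate $d=2q-n$ with $q-k$. In the four cases $d=4s,\ 4s+2,\ 4s-1,\ 4s+1$, not $2s$ or $2s+1$. With the correct $d$, a peak at $2\ell\approx d$ would sit near $\ell\approx 2s$, which contradicts your own choice of $\ell$. The mechanism you describe is also false: you claim the quotient grows while the factors $1+\frac{d+1-2\ell}{\cdots}$ are $\geq 1$ and decreases once they fall below $1$. Take case (a) with $s=1$, $k=10$, so $n=20$ and $q=12$, and set $F(\ell)=\binom{q}{2\ell}/\binom{n-q+2\ell-1}{2\ell}$. Then $F(1)=\tfrac{66}{36}$, $F(2)=\tfrac{495}{330}$ and $F(3)=\tfrac{924}{1716}$. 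So $F$ already drops from $\ell=1$ to $\ell=2$, although all four factors $\frac{12-i}{11-i}$ at $\ell=2$ exceed $1$.

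The correct argument is the consecutive ratio
\[
\frac{F(\ell+1)}{F(\ell)}=\frac{(q-2\ell)(q-2\ell-1)}{(n-q+2\ell+1)(n-q+2\ell)} .
\]
This is $\geq 1$ exactly when $q-2\ell\geq n-q+2\ell+1$, i.e.\ when $4\ell\leq 2q-n-1$. Hence $F$ increases up to $\ell^*=\lfloor (2q-n-1)/4\rfloor+1$ and strictly decreases afterwards. In the four cases $\ell^*=s,\ s+1,\ s,\ s+1$, and in case (d) one has $F(s)=F(s+1)$. The peak is at $4\ell\approx 2q-n$, not at $2\ell\approx 2q-n$. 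Your final choice of $\ell$ is right only because your two errors cancel.

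\textbf{The binomials in cases (b) and (d).} At $\ell=s+1$, the quantity $n-q+2\ell-1$ equals $k$ in case (b) and $k+1$ in case (d), not $k-1$. With $\binom{k-1}{2s+2}$ the inequalities you wrote are false near the thresholds. For example, in case (b) with $s=1$, $k=18$ we get $2\binom{17}{4}=4760<5985=\binom{21}{4}$, whereas $2\binom{18}{4}=6120$ works. In case (b), the product you quote is the right one for $\binom{k}{2s+2}$.

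In case (d), the correct inequality is $2\binom{k+1}{2s+2}\geq\binom{k+2s+1}{2s+2}$. After cancelling $(k+1)k$ it becomes $\prod_{j=1}^{2s}\bigl(1+\frac{2s+2}{k-j}\bigr)\leq 2$. Each factor increases with $j$, so this product is at most the one over $j=2,\dots,2s+1$ in Conjecture \ref{conj}(d). Thus (d) implies what you need, but the two are not equivalent. The paper's word ``equivalent'' at that point is likewise only an implication.
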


\begin{proof}
The idea of the proof is similar to the proof of Theorem 2. 
We let $q=\left\lfloor \frac{n-1}{2} + \sqrt{\left\lfloor \frac{n}{2}\right\rfloor \ln 2} \right\rfloor$ and we consider several cases:
\begin{enumerate}
\item[(a)] $n=2k$ and $\left\lfloor -0.5+ \sqrt{k \ln 2} \right\rfloor=2s$. Note that $k\geq \frac{(2s+0.5)^2}{\ln(2)}$.
           We have $q=k+2s$. According to Theorem \ref{suc},
           in order to show that $h(n,n)\geq q$, it is enough to show that
					 \begin{equation}\label{eq1}
					 2\binom{n-q+2\ell-1}{2\ell} = 2\binom{k-2s+2\ell-1}{2\ell} \geq \binom{k+2s}{2\ell},
					 \text{ for all }1\leq \ell \leq \left\lfloor \frac{q}{2} \right\rfloor.
					 \end{equation}
					 In fact, it is enough to show \eqref{eq1} for $\ell=s$, that is
					 $$
					 2\binom{k-1}{2s} \geq \binom{k+2s}{2s},
					 $$
					 which is equivalent to $\prod\limits_{j=1}^{2s} \left( 1 + \frac{2s+1}{k-j} \right) < 2$.
					 Since $k\geq \frac{(2s+0.5)^2}{\ln(2)}$, the conclusion follows from Conjecture \ref{conj}(a).
					
\item[(b)] $n=2k$ and $\left\lfloor -0.5+ \sqrt{k \ln 2} \right\rfloor=2s+1$. Note that $k \geq \frac{(2s+1.5)^2}{\ln(2)}$.
           We have $q=k+2s+1$. According to Theorem \ref{suc},
           in order to show that $h(n,n)\geq q$, it is enough to show that
					 \begin{equation}\label{eq2}
					 2\binom{n-q+2\ell-1}{2\ell} = 2\binom{k-2s+2\ell-2}{2\ell} \geq \binom{k+2s+1}{2\ell},
					 \text{ for all }1\leq \ell \leq \left\lfloor \frac{q}{2} \right\rfloor.
					 \end{equation}
					 In fact, it is enough to show \eqref{eq2} for $\ell=s+1$, that is
					 $$
					 2\binom{k}{2s+2} \geq \binom{k+2s+1}{2s+2},
					 $$
					 which is equivalent to $\prod\limits_{j=1}^{2s+1} \left( 1 + \frac{2s+2}{k-j} \right) \leq 2$.
					 Since $k \geq \frac{(2s+1.5)^2}{\ln(2)}$, the conclusion follows from Conjecture \ref{conj}(b).
					
\item[(c)] $n=2k+1$ and $\left\lfloor  \sqrt{k \ln 2} \right\rfloor=2s$. Note that $k \geq \frac{4s^2}{\ln(2)}$.
           We have $q=k+2s$. According to Theorem \ref{suc},
           in order to show that $h(n,n)\geq q$, it is enough to show that
					 \begin{equation}\label{eq3}
					 2\binom{n-q+2\ell-1}{2\ell} = 2\binom{k-2s+2\ell}{2\ell} \geq \binom{k+2s}{2\ell},
					 \text{ for all }1\leq \ell \leq \left\lfloor \frac{q}{2} \right\rfloor.
					 \end{equation}
					 In fact, it is enough to show \eqref{eq3} for $\ell=s$, that is
					 $
					 2\binom{k}{2s} \geq \binom{k+2s}{2s},
					 $
					 which is equivalent to $\prod\limits_{j=0}^{2s-1} \left( 1 + \frac{2s}{k-j} \right) \leq 2$.
					 Since $k \geq \frac{4s^2}{\ln(2)}$, the conclusion follows from Conjecture \ref{conj}(c).

\item[(d)] $n=2k+1$ and $\left\lfloor  \sqrt{k \ln 2} \right\rfloor=2s+1$. Note that $k \geq \frac{(2s+1)^2}{\ln(2)}$.
           We have $q=k+2s+1$. According to Theorem \ref{suc},
           in order to show that $h(n,n)\geq q$, it is enough to show that
					 \begin{equation}\label{eq4}
					 2\binom{n-q+2\ell-1}{2\ell} = 2\binom{k-2s+2\ell-1}{2\ell} \geq \binom{k+2s+1}{2\ell},
					 \text{ for all }1\leq \ell \leq \left\lfloor \frac{q}{2} \right\rfloor.
					 \end{equation}
					 In fact, it is enough to show \eqref{eq4} for $\ell=s+1$, that is
					 $$
					 2\binom{k+1}{2s+2} \geq \binom{k+2s+1}{2s+2},
					 $$
					 which is equivalent to $\prod\limits_{j=2}^{2s+1} \left( 1 + \frac{2s+2}{k-j} \right) \leq 22$.
					 Since $k \geq \frac{(2s+1)^2}{\ln(2)}$, the conclusion follows from Conjecture \ref{conj}(d).
\end{enumerate}
\end{proof}

Note that the bound given in Proposition \ref{pop} is sharper than the one from Theorem \ref{t2}.

\subsection*{Acknowledgment}

We would like to express our thanks to Professor Mihai Cipu for valuables discussions which helped us to improve this manuscript.






\begin{thebibliography}{9}

\bibitem{lucrare2} S.\ B\u al\u anescu, M.\ Cimpoea\c s, C.\ Krattenthaller, 
                   \emph{On the Hilbert depth of monomial ideals}, to appear in Comm. Algebra, arXiv:2306.09450v4 (2024).

\bibitem{lucrare3} S.\ B\u al\u anescu, M.\ Cimpoea\c s, 
                   \emph{On the Hilbert depth of certain monomial ideals and applications}, U.P.B. Sci. Bull., Series A
									 \textbf{87(4)} (2025), 53--66.

\bibitem{cipu} S.\ B\u al\u anescu, M.\ Cimpoea\c s, M.\ Cipu, 
\emph{On the Hilbert depth of the quotient ring of the edge ideal of a star graph}, arXiv:2501.16742 (2025).

\bibitem{bruns} W.\ Bruns, C.\ Krattenthaler, J.\ Uliczka, \emph{Stanley decompositions and Hilbert depth in the Koszul complex}, 
                J. Commut. Algebra \textbf{2(3)} (2010), 327--357.


\bibitem{maxim} W.\ Bruns, C.\ Krattenthaler and J.\ Uliczka, {\it Hilbert depth of powers of the maximal ideal}, 
                Commutative algebra and its connections to geometry, Contemp. Math. 555 (2011), 1--12.

\bibitem{uli} J.\ Uliczka, \emph{Remarks on Hilbert series of graded modules over polynomial rings}, 
              Manuscr. Math. \textbf{132} (2010), 159--168.
	
\end{thebibliography}
\end{document}